\newcommand{\Ext}{\mathrm{Ext}}
\newcommand{\Hom}{\mathrm{Hom}}
\newcommand{\End}{\mathrm{End}}
\newcommand{\Tor}{\mathrm{Tor}}
\newcommand{\ann}{\mathrm{ann}}
\newcommand{\m}{\mathfrak{m}}
\newcommand{\module}{\mathrm{mod}}
\newcommand{\MCM}{\mathrm{MCM}}
\newcommand{\CM}{\mathrm{CM}}
\newcommand{\add}{\mathrm{add}}
\newcommand{\proj}{\mathrm{proj}}
\newcommand{\depth}{\mathrm{depth }}
\newcommand{\gldim}{\mathrm{gldim }}
\newcommand{\pd}{\mathrm{pd }}
\newcommand{\CMdd}{\mathrm{CMdomdim }}
\newcommand{\ZZ}{\mathbb{Z}}
\newcommand{\imof}{\mathrm{im}}
\newtheorem{theorem}{Theorem}[section]
\newtheorem*{theorem*}{Theorem}
\newtheorem{lemma}[theorem]{Lemma}
\newtheorem*{lemma*}{Lemma}
\newtheorem{proposition}[theorem]{Proposition}
\newtheorem*{proposition*}{Proposition}
\newtheorem{corollary}[theorem]{Corollary}
\newtheorem*{corollary*}{Corollary}
\theoremstyle{definition}
\newtheorem{definition}[theorem]{Definition}
\newtheorem{remark}[theorem]{Remark}
\newtheorem{chunk}[theorem]{}
\author{\"{O}zg\"{u}r Esentepe}
\address{Department of Mathematics, University of Connecticut, Storrs, CT, 06269}
\email{ozgur.esentepe@uconn.edu}
\titleformat{\chapter}[block]{\Huge\scshape\bf\centering}{\thechapter.}{1em}{} 
\titleformat{\section}[block]{\large\scshape\bf\centering}{\thesection.}{1em}{} 
\titleformat{\subsection}[block]{\scshape\bf\centering}{\thesubsection.}{1em}{} 
\title{\scshape A Note On the Global Dimension of Shifted Orders}
\begin{document}
\maketitle
\begin{abstract}
    We consider the dominant dimension of an order over a Cohen-Macaulay ring in the category of centrally Cohen-Macaulay modules. There is a canonical tilting module in the case of positive dominant dimension and we give an upper bound on the global dimension of its endomorphism ring.
\end{abstract}

\section{Introduction}

In a recent article ``\emph{Special tilting modules for algebras with positive dominant dimension}" \cite{PresslandSauter}, Matthew Pressland and Julia Sauter study what they advertise in their title. They call such tilting modules \emph{shifted modules} and they study their \emph{shifted algebras}. One particular theorem they prove is the following.

\begin{theorem*}[\cite{PresslandSauter}, Proposition 2.13]
Let $\Gamma$ be a finite dimensional algebra with dominant dimension $d$ and let $0 \leq k \leq d$. Then, for the $k$-shifted algebra $B_k$ of $\Gamma$, we have
\begin{align*}
    \gldim B_k \leq \gldim \Gamma.
\end{align*}
\end{theorem*}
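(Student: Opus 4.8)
The plan is to realize $B_k$ as the endomorphism ring of an explicit tilting $\Gamma$-module and to read off its global dimension from the induced derived equivalence. Write the minimal injective coresolution $0\to\Gamma\to I^0\to\cdots\to I^{d-1}\to\cdots$, whose first $d$ terms are projective-injective by the hypothesis on the dominant dimension, and let $T_k$ be the shifted module, which I may describe up to additive closure as $\add T_k=\add\!\big(\Omega^{-k}\Gamma\oplus\bigoplus_{i=0}^{k-1}I^i\big)$: the $k$-th cosyzygy of $\Gamma$ together with the projective-injectives. First I would confirm that $T_k$ is tilting. The truncation $0\to\Gamma\to I^0\to\cdots\to I^{k-1}\to\Omega^{-k}\Gamma\to 0$ is an $\add T_k$-coresolution of $\Gamma$ of length $k$; the self-orthogonality $\Ext^{>0}_\Gamma(T_k,T_k)=0$ follows by dimension shifting combined with the injectivity of the $I^i$ (so that maps into cosyzygies extend along the $I^i$); and $\End_\Gamma(T_k)\cong B_k$ is the defining identification of the shifted algebra. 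This yields an equivalence $\mathrm{R}\Hom_\Gamma(T_k,-)\colon D^b(\Gamma)\xrightarrow{\sim}D^b(B_k)$ with inverse $G=-\otimes^{\mathrm L}_{B_k}T_k$.

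The crucial numerical input is a pair of bounds: $\pd_\Gamma T_k\le k$ and $\id_\Gamma T_k\le\gldim\Gamma-k$. The first is immediate from the coresolution above, since $\Gamma$ and the $I^i$ are projective. For the second I would use that $I^0,\dots,I^{k-1}$ are in particular injective, so dimension shifting along that same sequence gives $\Ext^{j}_\Gamma(Z,\Omega^{-k}\Gamma)\cong\Ext^{j+k}_\Gamma(Z,\Gamma)$ for every module $Z$ and every $j\ge 1$; the right-hand side vanishes once $j+k>\gldim\Gamma$, whence $\id_\Gamma\Omega^{-k}\Gamma\le\gldim\Gamma-k$, and the remaining summands of $T_k$ are injective. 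The entire point of the cosyzygy construction is exactly this trade: replacing $\Gamma$ by its $k$-th cosyzygy through projective-injective modules exchanges $k$ units of projective dimension for $k$ units of injective dimension, and it is the identity $k+(\gldim\Gamma-k)=\gldim\Gamma$ that will produce the sharp bound.

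With these bounds in hand the global dimension of $B_k$ can be estimated entirely inside $D^b(\Gamma)$. For $B_k$-modules $S,S'$ the equivalence gives $\Ext^i_{B_k}(S,S')\cong\Hom_{D^b(\Gamma)}(GS,GS'[i])$. Since the indecomposable projective $B_k$-modules are the $\Hom_\Gamma(T_k,T_k')$ with $T_k'\in\add T_k$ and the counit $\Hom_\Gamma(T_k,T_k')\otimes_{B_k}T_k\cong T_k'$ restores $\add T_k$, the object $GS$ is represented by a complex of $\add T_k$-modules whose cohomology is concentrated in degrees $[-k,0]$, the lower bound $-k$ coming from $\pd_\Gamma T_k\le k$. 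Resolving the target by injectives and invoking $\id_\Gamma T_k\le\gldim\Gamma-k$, the complex $GS'$ is isomorphic to a complex of injectives supported in degrees $\le\gldim\Gamma-k$, while the source $GS$ is supported in degrees $\ge -k$. A chain-level support count then forces $\Hom_{D^b(\Gamma)}(GS,GS'[i])=0$ once $i>(\gldim\Gamma-k)-(-k)=\gldim\Gamma$, and taking $S,S'$ simple gives $\gldim B_k\le\gldim\Gamma$.

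The main obstacle I anticipate is homotopical rather than conceptual: the natural $\add T_k$-representative of $GS$ is bounded above but need not be bounded below, so the clean support-overlap cancellation must be justified through careful truncation, using that maps from a bounded complex into a bounded-above complex of injectives are computed in the homotopy category only after restricting to the relevant degree window. I would handle this by smart-truncating both complexes to the window $[-k,0]$, where their cohomology lives, before resolving, so that all Hom-groups in sight become honestly finite and the degree count becomes literal. A secondary point to verify is the compatibility of the shift with iteration; should the direct estimate prove delicate, a safe fallback is to establish the case $k=1$ first, where $\pd_\Gamma T_1\le 1$ and the tilting torsion pair renders the four $\Ext$-computations between torsion and torsion-free simple $B_1$-modules completely explicit, and then to induct by factoring the $k$-shift as $k$ successive $1$-shifts.
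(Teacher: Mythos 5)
Your argument is essentially correct, but it takes a genuinely different route from the one in this paper. (Note that the statement you were given is the quoted result of Pressland--Sauter, which the paper cites without reproving; the closest in-house comparison is the proof of Theorem \ref{maintheorem}, the Cohen--Macaulay analogue.) You bound $\Ext^i_{B_k}(S,S')$ directly: transport to $D^b(\Gamma)$ via the tilting equivalence, use $\pd_\Gamma T_k\le k$ (more precisely, $\pd_{B_k}T_k\le k$, which follows from Miyashita's theorem that $T_k$ is a tilting module of the same projective dimension over its endomorphism ring) to place the cohomology of $GS$ in degrees $\ge -k$, and use $\id \Omega^{-k}\Gamma\le\gldim\Gamma-k$ to replace $GS'$ by a complex of injectives in degrees $\le\gldim\Gamma-k$; a support count then gives vanishing for $i>\gldim\Gamma$. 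The truncation issue you flag is real but resolves favorably: after applying $\tau_{\ge -k}$ the degree $-k$ term leaves $\add T_k$ and its injective dimension is only bounded by $\gldim\Gamma$, yet since it sits in degree $-k$ it still contributes injectives only up to total degree $\gldim\Gamma-k$, so the count survives. The paper instead works with a \emph{minimal projective resolution} $P$ of an arbitrary $\Gamma$-module, transports it to a complex in $K^{-,b}(\add T)$, and proves a splitting lemma (Lemma \ref{technicallemma}): any such complex with cohomology in a fixed window decomposes as (acyclic) $\oplus$ (bounded of controlled width), the splitting being forced by the rigidity of $T$, its finite (CM-)injective dimension, and the finiteness of $\gldim\Lambda$; minimality then kills the acyclic summand and bounds the width of $P$. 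The paper's route never invokes injective resolutions over the base order and is what yields the sharper bound $\gldim\Gamma\le\gldim\Lambda-d$ in Krull dimension $d$ (your count, specialized to $d=0$, recovers exactly $\gldim B_k\le\gldim\Gamma$); your route is shorter for the finite-dimensional statement and makes the numerical trade $k+(\gldim\Gamma-k)$ completely transparent, but as written it would not see the improvement by $d$ without replacing $\id$ by the CM-injective dimension.
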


The aim of this note is to show that this theorem holds true in the setting of Cohen-Macaulay representation theory. In our setting, we replace finite dimensional algebras over a field with orders over a Cohen-Macaulay local ring with canonical module. Therefore, we replace the module category of an algebra with the category of Cohen-Macaulay modules over an order. We study the dominant dimension in this category. This idea appears also in \cite{AuslanderRoggenkamp} where the authors characterize orders of finite lattice type.

We spend Section 2 to introduce the necessary background and notation in Cohen-Macaulay representation theory while we emphasize the similarities between the theory of finite dimensional algebras and the theory of orders over Cohen-Macaulay local rings.

In Section 3, we introduce the notion of Cohen-Macaulay dominant dimension. This is a notion which proves useful only in the noncommutative setting as the dominant dimension of a commutative Gorenstein local ring is infinite while it is zero when we have a Cohen-Macaulay local ring which is not Gorenstein. We show computations for one noncommutative example and discuss the Cohen-Macaulay Nakayama conjecture. 

When an order over a Cohen-Macaulay local ring has positive Cohen-Macaulay dominany dimension, there is a special family of tilting modules. We call their endomorphism rings \textit{shifted orders}. While in general derived equivalence preserves finiteness of global dimension, the exact value of the global dimension is not preserved. We prove that the global dimension of the shifted orders can not be greater than the original order.

\begin{theorem*}
Suppose that $\Lambda$ is an $n$-canonical QF-3 order of finite global dimension over a Cohen-Macaulay local ring $R$ with canonical module. Then, the $\ell^{\rm{th}}$-shifted order $\Gamma_\ell$ has global dimension
\begin{align*}
\gldim \Gamma_\ell \leq \gldim \Lambda.
\end{align*}
\end{theorem*}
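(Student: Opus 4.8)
The plan is to combine the derived equivalence coming from the canonical tilting module with the fact that, over a $d$-dimensional Cohen–Macaulay base, the $d$-th syzygy of any module lands in $\MCM \Lambda$, so that the ``top $d$ degrees'' of homological algebra over $\Lambda$ collapse. First I would fix notation: write $g = \gldim \Lambda$, let $T$ be the canonical tilting $\Lambda$-module provided by the positive dominant dimension of the QF-3 order $\Lambda$, and set $\Gamma = \End_\Lambda(T)$. Since $g < \infty$, the module $T$ has finite projective dimension, and the tilting formalism supplies a derived equivalence $\mathrm{R}\Hom_\Lambda(T,-) \colon D^b(\module \Lambda) \xrightarrow{\ \sim\ } D^b(\module \Gamma)$ with quasi-inverse $-\otimes^{\mathbf{L}}_\Gamma T$. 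I would also record the associated tilting torsion pair, whose classes $\mathcal{X}, \mathcal{Y} \subseteq \module \Lambda$ are the essential images of $\module \Gamma$ in cohomological degrees $0$ and $1$ (suitably interpreted for higher projective dimension).

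Next I would reduce the statement to an $\Ext$-vanishing assertion: it suffices to prove $\Ext^i_\Gamma(M,N) = 0$ for all finitely generated $\Gamma$-modules $M, N$ and all $i > g - d$. Transporting along the equivalence gives
\begin{align*}
\Ext^i_\Gamma(M,N) \;\cong\; \Hom_{D^b(\module \Lambda)}\!\bigl(M \otimes^{\mathbf{L}}_\Gamma T,\; (N \otimes^{\mathbf{L}}_\Gamma T)[i]\bigr),
\end{align*}
so the problem becomes one of bounding morphisms in $D^b(\module \Lambda)$ between objects whose cohomologies lie in the tilting classes $\mathcal{X}, \mathcal{Y}$. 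By dévissage along the (bounded) cohomological filtration, this Hom is controlled by the groups $\Ext^j_\Lambda(A,B)$ with $A, B$ ranging over the modules in $\mathcal{X} \cup \mathcal{Y}$, and the naive bound $\gldim \Gamma \le g + \pd_\Lambda T$ falls out immediately. The entire content of the theorem is therefore the sharpened claim that $\Ext^j_\Lambda(A,B) = 0$ already for $j > g - d$ whenever $A, B$ are built from $T$ and its cosyzygies.

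To establish this sharpened vanishing I would use the $n$-canonical QF-3 structure together with the Cohen–Macaulay geometry. Because $\Lambda$ is maximal Cohen–Macaulay over the $d$-dimensional ring $R$, every finitely generated $\Lambda$-module has maximal Cohen–Macaulay $d$-th syzygy, and by Auslander–Buchsbaum an MCM module of finite projective dimension is projective. The key mechanism is that, after $d$ syzygy steps, any module relevant to computing $\Ext_\Lambda$ among the tilting classes becomes MCM; at that point the hypothesis $n > d$ guarantees that the minimal $\omega_\Lambda$-coresolution of $\Lambda$ remains projective-injective past degree $d$, so these MCM syzygies admit projective-injective covers that truncate the resolution. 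Feeding this back, the Ext groups $\Ext^j_\Lambda(A,B)$ vanish once $j$ exceeds $g - d$, which yields $\gldim \Gamma \le g - d$ as required.

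The step I expect to be the main obstacle is precisely this last quantitative matching: converting the qualitative statement ``syzygies are MCM after $d$ steps, and dominant dimension exceeds $d$'' into the exact vanishing of the top $d$ cohomological degrees. The delicate bookkeeping is in tracking which cosyzygies of $\Lambda$ are projective-injective, verifying that the tilting classes $\mathcal{X}, \mathcal{Y}$ are stable under the syzygy operations needed to reach the MCM range, and checking that the projective-injective truncation interacts correctly with the derived tilting functor rather than merely with $\module \Lambda$. Once that interplay between $\dim R = d$ and $n > d$ is pinned down, the remaining degrees are governed by $\gldim \Lambda = g$ through the equivalence, and the bound follows.
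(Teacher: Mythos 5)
Your overall frame --- transport a minimal amount of data across the derived equivalence and then argue that the top $d$ cohomological degrees collapse --- points in the right direction, but the step you yourself flag as the main obstacle is a genuine gap, and the mechanism you propose for closing it rests on a misreading of the hypotheses. You write that ``the hypothesis $n > d$ guarantees that the minimal $\omega$-coresolution of $\Lambda$ remains projective-injective past degree $d$.'' That statement would say $\CMdd \Lambda > d$, which is neither assumed nor implied: the QF-3 hypothesis only gives CM-dominant dimension at least $1$ (so only $I^0$ is projective-injective), while $n$ is the CM-\emph{injective} dimension of $\Lambda$, i.e.\ $\pd_{\Lambda}\omega$, and $n>d$ says the coresolution is \emph{long}, not that its early terms are projective. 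With the dominant dimension only $\geq 1$, your truncation-by-projective-injectives argument has nothing to run on. Separately, the reduction to ``$\Ext^j_\Lambda(A,B)=0$ for $j> g-d$ whenever $A,B$ lie in the tilting classes'' is both unproven and doubtful as stated: the modules $M\otimes_\Gamma T$ and $\Tor_1^\Gamma(M,T)$ are arbitrary members of $\mathcal{X}$ and $\mathcal{Y}$ with no depth constraints, dimension shifting only gives that their $d$-th syzygies are MCM, and combining that with $\Ext^{\geq n}_\Lambda(\CM(\Lambda),T)=0$ yields vanishing only for $j\geq n+d=g$, i.e.\ the trivial bound. There is also an off-by-one issue: the d\'evissage for $\Ext^i_\Gamma(M,N)$ with $i>g-d$ involves $\Ext^{i-1}_\Lambda$ terms, so your claimed vanishing range $j>g-d$ would only give $\gldim\Gamma\leq g-d+1$.

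The paper avoids all of this by never passing to the cohomology modules of $M\otimes^{\mathbf{L}}_\Gamma T$. Instead it takes a \emph{minimal} projective $\Gamma$-resolution $P$ of $M$, transports it to the complex $T\otimes_\Gamma P\in K^{-,b}(\add T)$, and proves a splitting lemma (Lemma \ref{technicallemma}) for such complexes: the kernels $L_j$ of the differentials become maximal Cohen--Macaulay after $d$ steps into the acyclic range (depth lemma --- this is the sole source of the ``$-d$''), the CM-injective dimension $n-1$ of $T$ then kills $\Ext^{\geq n}_\Lambda(L_j,T)$, and finiteness of $\gldim\Lambda$ forces a splitting that chops the complex to width $n+1$. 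Minimality is essential here: it is what lets one discard the acyclic summand $Y$ of the decomposition, and your proposal has no counterpart to this step. To repair your argument you would essentially have to reconstruct this lemma; the $\add T$ structure of the \emph{terms} of the transported complex, which your cohomology-level d\'evissage discards, is exactly what makes the dimension shift against $T$ work.
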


\textbf{Acknowledgements.} I would like to thank Graham Leuschke for adopting me as a PhD student. When I visited him in 2018 in Syracuse, he mentioned an ICRA2016 talk by William Crawley-Boevey. Main ideas for this paper appeared when I was trying to understand Crawley-Boevey's talk. But then I forgot about the original problem and this paper was born. I also thank Osamu Iyama, Benjamin Briggs, Vincent G{\'e}linas and Louis-Philippe Thibault for many fruitful discussions. Finally, many thanks to Delmie Esentepe for delaying the final version of this paper for more than a year by being born during a pandemic and allowing me to rewrite it for better exposition.

\section{Background and Motivation}

The aim is that this short note is as self-contained as possible. We will start with the definitions from commutative world and then discuss how it generalizes to the noncommutative landscape.

\begin{chunk}\label{mainsetting}\textbf{Setting, conventions and notation.}
Throughout this paper, we denote by $R$ a commutative Noetherian local ring $(R, \m)$ of Krull dimension $d$ and by $\Lambda$ a module-finite $R$-algebra with Jacobson radical $J$. We always consider right $\Lambda$-modules and we treat $\Lambda^{op}$-modules as left $\Lambda$-modules. Unless otherwise stated all modules are finitely generated.
\end{chunk}

\begin{chunk}\textbf{Commutative algebra.} Let $M$ be an $R$-module. We say that a ring element $r \in R$ is a non-zerodivisor on $M$ if $rm \neq 0$ for any nonzero $m \in M$. A regular sequence on $M$ is a sequence $r_1, \ldots, r_\ell$ of elements in $\m$ such that $r_1$ is a non-zerodivisor on $M$ and $r_j$ is a non-zerodivisor on $M/(r_1, \ldots, r_{j-1})M$ for $1 < j \leq \ell$. The maximal length of a regular sequence on $M$ is an important invariant and it is called the depth of $M$. There is also a homological characterisation of depth:
\begin{align*}
  \depth_R M  = \inf\left\{ i \colon \Ext_R^i(R/\m,M) \neq 0 \right\}.
 \end{align*}
Another important invariant of $M$ is the dimension of $M$ which is defined as the Krull dimension of the quotient ring $R/\ann_RM$. These two invariants satisfy the following inequalities for any nonzero $M$: $    \depth_R M \leq \dim _RM \leq d.
$ We say that $M$ is a \textit{maximal Cohen-Macaulay} module if there is equality. We also assume that the zero module is maximal Cohen-Macaulay and denote the category of maximal Cohen-Macaulay modules by $\MCM(R)$.

If $M$ has finite projective dimension, then the depth and the projective dimension of $M$ add up to $d$. This is the \textit{Auslander-Buchsbaum formula.} As an immediate consequence, we see that projective modules are maximal Cohen-Macaulay and non-projective maximal Cohen-Macaulay modules have infinite projective dimension. Therefore, if $R$ has finite global dimension (for instance when it is the coordinate ring of a smooth irreducible variety) we see that every maximal Cohen-Macaulay module is projective. The converse is also true.

We say that $R$ is a \emph{Cohen-Macaulay} ring if the regular module $R$ is maximal Cohen-Macaulay. For a Cohen-Macaulay ring $R$, we denote by $\omega_R$ an $R$-module which has finite injective dimension, is maximal Cohen-Macaulay and satisfies $\dim \Ext_R^d(k, \omega_R) = 1$ where $k = R/\m$ is the residue field. This module, if exists, is unique up to isomorphism and we call it the \textit{canonical module} of $R$. In this paper, we will always be interested in Cohen-Macaulay base rings with canonical module. Existence of a canonical module is not a big ask. For example, quotients of power series rings have canonical modules.

The following conditions are equivalent for a Cohen-Macaulay local ring $R$ with canonical module $\omega_R$. (1) $\omega_R$ is free of rank one, (2) $R$ has finite injective dimension as a module over itself, (3) $R$ has injective dimension $d$ and finally (4) $\Ext_R^n(k,R) = 0$ for $n \neq 0, d$ and $\Ext_R^d(k,R) \cong k$. In this case we say that $R$ is a \textit{Gorenstein} ring.

Over a Gorenstein ring, a module $M$ is maximal Cohen-Macaulay if and only if $\Ext_R^i(M,R) = 0$ for all $i>0$.
\end{chunk}

\begin{chunk}
\textbf{Noncommutative algebra.} In the noncommutative landscape there are different notions of Gorenstein rings and maximal Cohen-Macaulay modules. We start with Iwanaga-Gorenstein rings. Let $\Lambda$ be as in Setting \ref{mainsetting}.

\begin{definition}
    We say that $\Lambda$ is an \textit{Iwanaga-Gorenstein} ring if it has finite injective dimension as a module over itself and as a module over its opposite ring. In this case, a finitely generated $\Lambda$-module $M$ is called \textit{maximal Cohen-Macaulay} if $\Ext_\Lambda^i(M,\Lambda) = 0$ for all $i > 0 $.
\end{definition}
\begin{remark}
In different contexts, these modules are also called Gorenstein-projective, Gorenstein dimension zero or totally reflexive. This definition of maximal Cohen-Macaulay modules follow Buchweitz's soon-to-be published famous manuscript \cite{B} where the triangle equivalence between the stable category of maximal Cohen-Macaulay modules and the singularity category was proved. It also follows Iyama's 2018 ICM plenary talk \cite{Iyama} in Rio on tilting Cohen-Macaulay representations. However, this is not the definition we will follow in this paper. 
\end{remark} 
\begin{remark}\label{every-module-over-selfinjective}
If $R$ is a field and $\Lambda$ is a self-injective $R$-algebra, then we see that every module is maximal Cohen-Macaulay.
\end{remark}
We will now give another approach to maximal Cohen-Macaulay modules over noncommutative rings. Again, assume that $\Lambda$ is as in Setting \ref{mainsetting} and let $M$ be a $\Lambda$-module. A theorem from 2002 due to Shiro Goto and Kenji Nishida \cite[Corollary 3.2]{GotoNishida} says that the depth of $M$ as an $R$-module does not depend on the base ring. That is, if $S$ is another commutative Noetherian ring over which $\Lambda$ is a module-finite algebra, then $\depth_R M = \depth_S M$. Indeed, one has the following equality:
\begin{align*}
\depth_R M =    \inf \left\{ i \colon \Ext_\Lambda^i(\Lambda/J, M) \neq 0 \right\}
\end{align*}
where the right hand side does not depend on $R$. Thus, when we talk about the depth of a $\Lambda$-module, we can drop the subscript $R$. According to \cite[Section 2]{IyamaReiten}, the dimension of $M$ is also independent of the base ring. Therefore, it makes sense to consider the following definition.
\begin{definition}
A $\Lambda$-module is said to be \textit{maximal Cohen-Macaulay} if it is maximal Cohen-Macaulay as an $R$-module. We say that $\Lambda$ is an \textit{$R$-order} if it is maximal Cohen-Macaulay as a module over itself.
\end{definition}

\begin{remark}\label{every-module}
According to this definition, if $R$ is a field, orders are precisely finite dimensional algebras. And the category of maximal Cohen-Macaulay modules over an order coincides with the module category: every module is maximal Cohen-Macaulay.
\end{remark}

This is the definition of maximal Cohen-Macaulay modules we will consider in this paper. We will denote the category of maximal Cohen-Macaulay $\Lambda$-modules by $\CM(\Lambda)$.
\end{chunk}
\begin{chunk}
\textbf{When do the two definitions agree?} Remarks \ref{every-module-over-selfinjective} and \ref{every-module} tell us that if the base ring is a field, the two definitions of maximal Cohen-Macaulay modules agree when the algebra is self-injective. When the base ring is a Cohen-Macaulay local ring with canonical module $\omega_R$, we have a similar phenomenon.
\begin{definition}
We call the $\Lambda$-bimodule $\omega=\Hom_R(\Lambda, \omega_R)$ the \textit{canonical (bi)module} of $\Lambda$. We say that $\Lambda$ is a \textit{Gorenstein order} if $\omega$ is a projective $\Lambda$-module.
\end{definition}
Note that if $\Lambda = R$, then $R$ is Gorenstein if and only if $R$ is a Gorenstein order. So, this is another generalization of Gorenstein rings to the noncommutative world.

If $\Lambda$ is a Gorenstein $R$-order, then the two definitions of maximal Cohen-Macaulay modules agree. Moreover, for a Gorenstein order, the Auslander-Buchsbaum formula holds: for every $\Lambda$-module $M$ of finite projective dimension, the depth of $M$ and the projective dimension of $M$ adds up to the Krull dimension of $R$.

In general, when $\Lambda$ is an $R$-order, the canonical bimodule plays an important role in $\CM (\Lambda)$: for every $X \in \CM(\Lambda)$, we have $\Ext_\Lambda^{>0}(X, \omega)=0$. This means that $\omega$ is an injective object in $\CM(\Lambda)$. Moreover, $\omega$ is in fact an additive generator for the subcategory of injective objects in $\CM(\Lambda)$.

The functor $\Hom_\Lambda(-,\omega)$ is isomorphic to $\Hom_R(-,\omega_R)$ on $\CM(\Lambda)$ by hom-tensor adjunction and it is an exact duality from $\CM(\Lambda)$ to $\CM(\Lambda^{\rm op})$. Given a $\Lambda$-module $M$, consider the $\Lambda^{\rm op}$-module $\Hom_\Lambda(M, \omega)$ and take a projective resolution of it as a $\Lambda^{\rm op}$. Then, dualizing it again with the functor $\Hom_R(-,\omega_R)$ gives that every maximal Cohen-Macaulay $\Lambda$-module has an injective resolution in $\CM(\Lambda)$. This gives a well-defined definition of $\CM$-injective dimension.

\begin{definition}
We say that $\Lambda$ is an $n$-\textit{canonical order} if the canonical module $\omega$ has projective dimension $n$ as a $\Lambda^{\rm op}$-module (equivalently, as a $\Lambda$-module) or equivalently, the regular module $\Lambda$ has CM-injective dimension equal to $n$.
\end{definition}

If $\Lambda$ is an $n$-canonical order, then the injective dimension of $\Lambda$ is equal to $d+n$ \cite[Proposition 1.1(3)]{GotoNishida}. In particular, if $\Lambda$ has finite global dimension, then $\gldim \Lambda = d+n$. So, the CM-injective dimension of $\Lambda$ measures the global dimension, in a sense. When $\Lambda$ is a Gorenstein order, the injective dimension is equal to $d$.
\end{chunk}

\section{Dominant Dimension}

We have seen that Gorenstein orders are in some sense generalizations of self-injective algebras. Self-injective algebras are ubiquitous. To name a few examples, let us start with a finite group $G$ and consider the group algebra $kG$ whose module category is equivalent to the category of $k$-representations of $G$. Other examples include cohomology rings of closed orientable manifolds and Artinian coordinate rings $k[x_1, \ldots, x_n]/(r_1, \ldots, r_n)$ given by regular sequences.

Dominant dimension is a measure of how far away an algebra is from being a self-injective algebra. The pioneers of the theory include Nakayama, Tachikawa, M{\"u}ller and others. The dominant dimension of an algebra $A$ is the minimal number $\ell$ in a minimal injective resolution 
\begin{align*}
    0 \to A \to I^0 \to \ldots \to I^\ell \to \ldots
\end{align*}
such that $I^\ell$ is not projective. If such a number does not exist, we say that the dominant dimension is infinite. For self-injective algebras, the dominant dimension is infinite. The converse of this statement is the famous Nakayama conjecture: It is still an open problem whether infinite dominant dimension implies that the algebra $A$ is self-injective.

In this paper, motivated by the previous section, we will consider the dominant dimension in the category of Cohen-Macaulay modules.

\begin{chunk}\label{setting}
\textbf{Setting.} Throughout this section, we assume that $R$ is a Cohen-Macaulay local ring of Krull dimension $d$ with canonical module and $\Lambda$ is an $R$-order. When we say $\Lambda$ is an $n$-canonical order, we will tacitly assume that $0 \neq n < \infty$. By $\Pi$, we denote a $\Lambda$-module such that $\add \Pi = \add \Lambda \cap \add \omega$. That is, we let $\Pi$ be an additive generator for the subcategory of CM-projective-injective $\Lambda$-modules. We will also assume for convenience that $\Lambda$ is semiperfect and therefore minimal resolutions exist. This can be guaranteed, for instance, by assuming that $R$ is complete.
\end{chunk}

\begin{definition}
We say that $\Lambda$ has \textit{CM-dominant dimension} $\ell$ if $\ell$ is the smallest number in a minimal CM-injective coresolution 
\begin{align*}
    0 \to \Lambda \to I^0 \to \ldots \to I^\ell \to \ldots
\end{align*}
such that $I^\ell$ is not projective (or $\infty$ if such $\ell$ does not exist). We denote the CM-dominant dimension of $\Lambda$ by $\CMdd(\Lambda)$.
\end{definition}
Note that this definition only proves useful in the noncommutative setting. Indeed, in the commutative case, we have $\pd_R \omega_R = 0$ if and only if $R$ is Gorenstein. In this case, the Cohen-Macaulay dominant dimension is $\infty$. Otherwise, that is if $R$ is a non-Gorenstein Cohen-Macaulay local ring, then $\pd_R \omega_R = \infty$ as a consequence of the Auslander-Buchsbaum formula and the CM-dominant dimension is zero. 

It is immediate to see that Gorenstein orders have infinite Cohen-Macaulay dominant dimension. We shall now consider a nontrivial noncommutative example.

\begin{chunk}
\textbf{A nontrivial example.}
Let $k$ be an infinite field and consider $R = k[[x,y,z,u,v]]/I$ where $I$ is the ideal generated by the $2 \times 2$ minors of the generic matrix
\begin{align*}
    \begin{bmatrix}
    x & y & u \\ y & z & v
    \end{bmatrix}.
\end{align*}
This ring is an example of a scroll. Scrolls have nice properties and geometric interpretations for which refer to \cite{eis-har}. In particular, our ring $R$ is an integrally closed Cohen-Macaulay normal domain of Krull dimension 3 and it is a toric isolated singularity. If we consider the $\ZZ$-graded algebra 
\begin{align*}
    S = k[[X_1, X_2, X_3, X_4]] = \bigoplus_{i \in \ZZ} S_i
\end{align*}
with $\deg X_1 = 2, \deg X_2=1$ and $\deg X_3=\deg X_4 = -1$, then we can identify $R$ with $S_0$. 

As an application of the theory of almost split sequences, Auslander and Reiten proved in \cite[Theorem 2.1]{Auslander-Reiten} that $R$ has finite Cohen-Macaulay type: it has only finitely many indecomposable maximal Cohen-Macaulay modules up to isomorphism. Using the notation from \cite[Proposition 16.12]{Y} and the above identification, the indecomposable maximal Cohen-Macaulay modules are $    S_{1}, S_0 \cong R, S_{-1} \cong \omega_R, S_{-2}$ and a rank two module $M$. Let us set notation: We put $X = R \oplus \omega$ and $\Lambda = \End_R(X)$. Then, $\Lambda$ is an $R$-order. We put
\begin{align*}
    D = \Hom_R(-,\omega_R) &: \CM(\Lambda) \to \CM(\Lambda^{\mathrm {op}})\\
    F = \Hom_R(X, - ) &: \module R \to \module \Lambda \\
    G = \Hom_R(-,X) &: \module R \to \module \Lambda^{\mathrm{op}}.
\end{align*}
Note that $F$ restricts to an equivalence $\add X \cong \proj \Lambda$ and $G$ restricts to an equivalence $\add X \cong \proj \Lambda^{\mathrm{op}}$. We are now ready to start. The first thing we do is to understand CM-injective $\Lambda$-modules. To do so, we compute $\omega$.
\begin{align*}
\omega = D \Lambda = D\Hom_R(X,X)=DG(X) = DG(R) \oplus DG(\omega_R).\end{align*}
On the other hand, we have
\begin{align*}
    DG(R) = D\Hom_R(R,X) = DX = \Hom_R(X, \omega_R) = F(\omega_R)
\end{align*}
and
\begin{align*}
    DG(\omega_R) = D\Hom_R(\omega, X) &= D\Hom_R(S_{-1}, S_0 \oplus S_{-1}) \\&= D(S_1 \oplus R) = S_2 \oplus S_{-1} \\&= \Hom_R(S_0 \oplus S_{-1}, S_{-2}) = \Hom_R(X, S_{-2}) = F(S_{-2}).
\end{align*}
Therefore, we conclude that $\omega = F(\omega_R \oplus S_{-2})$. In other words, the only indecomposable CM-injective $\Lambda$-modules are $\Hom_R(X,\omega_R)$ and $\Hom_R(X, S_{-2})$ up to isomorphism as $F$ is fully faithful.

Let us now compute a minimal CM-injective resolution of $\Lambda$. We will start with a projective resolution of $\omega$. We have a short exact sequence
\begin{align*}
    0 \to R \to \omega_R^{\oplus 2} \to S_{-2} \to 0.
\end{align*}
If we apply $F$ to this exact sequence, we get
\begin{align*}
    0 \to FR \to  F\omega_R^{\oplus 2} \to FS_{-2} \to \Ext_R^1(X,R) \cong 0.
\end{align*}
Since $F\omega_R$ is CM-injective, this short exact sequence yields
\begin{align*}
    0 \to FR \to  F\omega_R^{\oplus 3} \to F\omega_R \oplus F{S_{-2}} \cong \omega \to 0
\end{align*}
which is a minimal projective resolution of the $\Lambda$-module $\omega$. Now, applying $D$ to this resolution, we get the following CM-injective resolution of $\Lambda$ in $\CM(\Lambda^{op})$:
\begin{align*}
    0 \to \Lambda \to DF\omega_R^{\oplus 3} \to DFR \to 0.
\end{align*}
We have seen that the only indecomposable CM-injective $\Lambda$-modules are $F\omega_R$ and $FS_{-2}$. Therefore, $FR$ is not a CM-injective $\Lambda$-module and $DFR$ is not a projective $\Lambda^{op}$-module which shows us that the CM-dominant dimension of $\Lambda$ is $1$.
\end{chunk}
\begin{chunk}
\textbf{Cohen-Macaulay Nakayama conjecture.} We have said that the dominant dimension of a finite dimensional algebra measures how far away it is from being a self-injective algebra: If the algebra is self-injective, then the dominant dimension is infinite. We have mentioned that the converse of this statement is the Nakayama conjecture. It is easy to see from our definition of Cohen-Macaulay dominant dimension, Gorenstein orders have infinite CM-dominant dimension. Indeed, the canonical module of a Gorenstein order is projective and projective modules and CM-injective modules coincide. The Cohen-Macaulay Nakayama conjecture states, then, that if an order over a Cohen-Macaulay local ring has infinite dominant dimension, then it must be a Gorenstein order. The following proposotion shows that if one can find a counterexample to the Cohen-Macaulay Nakayama conjecture, then it gives a counterexample to the original Nakayama conjecture.
\begin{proposition}
Let $\Lambda$ be as in Setting \ref{setting} and $x$ be a central nonzerodivisor on $\Lambda$. Then,
\begin{align*}
    \CMdd(\Lambda) = \CMdd(\Lambda/x \Lambda).
\end{align*}
\end{proposition}
\begin{proof}
  It is standard that if the depth of $\Lambda$ is $d$ as an $R$-module, then the depth of $\Lambda/x\Lambda$ as an $R/xR$-module is $d-1$. That is, $\Lambda/x\Lambda$ is an $R/xR$-order and our definitions make sense.
  
  We know that if $P$ is a projective $\Lambda$-module, then $P/xP$ is a projective $\Lambda/x\Lambda$-module. We also have that the canonical module of $R/xR$ is isomorphic to $\omega_R/x\omega_R$. From here, we can conclude that
  \begin{align*}
      \omega_{\Lambda/x\Lambda} = \Hom_{R/xR}(\Lambda/x\Lambda, \omega_{R/xR}) \cong \omega_\Lambda/x\omega_\Lambda.
  \end{align*}
  Now, we can complete the proof by taking a projective resolution of $\omega_\Lambda$ as a $\Lambda^{\rm{op}}$-module and tensoring it with $\Lambda/x\Lambda$ since by doing so still gives us an exact seqeunce and we have already observed that CM-projective-injective $\Lambda$-modules go to CM-projective-injective $\Lambda/x\Lambda$-modules.
\end{proof}
\begin{corollary}
Let $\Lambda$ be as in Setting \ref{setting} and let $\mathbf{x}$ be a regular sequence of length $d$ on $\Lambda$. Then, we have 
\begin{align*}
    \CMdd(\Lambda) = \rm{domdim}(\Lambda/\mathbf{x} \Lambda).
\end{align*}
Hence a counterexample to the Cohen-Macaulay Nakayama conjecture would imply a counterexample to the original Nakayama conjecture.
\end{corollary}

\begin{remark}
It is well-known in the representation theory of finite dimensional algebras that the Nakayama conjecture holds true for those algebras which have finite finitistic dimension. Similar arguments also can be made for orders over Cohen-Macaulay local rings. When $\Lambda$ is an $n$-canonical order over a Cohen-Macaulay local ring of Krull dimension $d$, there is an inequality version of Auslander-Buchsbaum formula which was proved by Stangle in their thesis \cite{Stangle}: for any $\Lambda$-module $X$ with finite projective dimension, we have \[d \leq \pd_\Lambda X + \depth X \leq d + n.\] Hence $n$-canonical orders have finite finitistic dimension. Therefore, if one wants to find a counterexample to the Nakayama conjecture, then they need to look at orders over which the canonical module has infinite projective dimension.
\end{remark}
\end{chunk}

\section{Shifted Orders}

In this section, we keep assuming Setting \ref{setting}. We start with an $R$-order $\Lambda$ of CM-dominant dimension at least $\ell$ so that the minimal CM-injective coresolution of $\Lambda$ is of the form
\begin{align}\label{coresolution}
    0 \to \Lambda \to \Pi^0 \to \Pi^1 \to \ldots \to \Pi^{\ell - 2} \to \Pi^{\ell - 1} \to \ldots
\end{align}
with $\Pi^j$'s CM-projective-injective for $j = 0, 1, \ldots, \ell-1$. We say that $\Lambda$ is a \emph{QF-3 order} if the CM-dominant dimension of $\Lambda$ is at least $1$. The letters QF come from quasi-Frobenius rings and QF-3 rings were introduced by Robert Thrall as a generalization of quasi-Frobenius rings \cite{Thrall}, see also \cite{Tachikawa}.

We denote by $K_\ell$ the cokernel of $\Pi^{\ell -2} \to \Pi^{\ell -1}$. It is clear by definition that the projective dimension of $K_\ell$ is at most $\ell$. 

\begin{lemma}\label{lemmadefiningK}
  The $\Lambda$-module $K_\ell$ is a Cohen-Macaulay $\Lambda$-module.
\end{lemma}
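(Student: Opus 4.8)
The plan is to realize $K$ as the image, under the canonical duality $D = \Hom_R(-,\omega_R) \cong \Hom_\Lambda(-,\omega)$, of a first syzygy taken over the opposite order, where maximal Cohen-Macaulayness can be verified by a direct depth count. Working naively on the sequence $0 \to \Lambda \xrightarrow{i} \Pi^0 \to K \to 0$ is not enough: since $\Pi^0 \in \add \Pi \subseteq \add \Lambda$ is projective, both $\Lambda$ and $\Pi^0$ have $\depth$ equal to $d$, and the depth lemma only yields $\depth K \geq \min\{\depth \Pi^0, \depth \Lambda - 1\} = d-1$. To gain the missing unit of depth I would instead pass to $\Lambda^{\mathrm{op}}$ and control a syzygy there.

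First I would take a projective cover $\pi \colon Q_0 \to \omega$ of the canonical module over $\Lambda^{\mathrm{op}}$, which exists because $\Lambda$ is semiperfect, and set $\Omega = \ker \pi$, so that $0 \to \Omega \to Q_0 \xrightarrow{\pi} \omega \to 0$ is a short exact sequence of $\Lambda^{\mathrm{op}}$-modules. The key step is that $\Omega$ is maximal Cohen-Macaulay: as a projective module $Q_0$ is a summand of a free $\Lambda^{\mathrm{op}}$-module and hence $\depth Q_0 = d$, while $\depth \omega = d$ since $\omega \in \CM(\Lambda^{\mathrm{op}})$; the depth lemma then gives $\depth \Omega \geq \min\{\depth Q_0, \depth \omega + 1\} = d$, forcing $\Omega \in \CM(\Lambda^{\mathrm{op}})$.

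Next I would apply the exact duality $D$ to this sequence, which is now built entirely from maximal Cohen-Macaulay modules. Because $\Lambda$ is QF-3, i.e.\ $\CMdd \Lambda \geq 1$, the first term of the minimal CM-injective coresolution of $\Lambda = D\omega$ is projective; concretely $DQ_0 = I^0$ is projective, so $Q_0$ is projective-injective over $\Lambda^{\mathrm{op}}$ and $DQ_0 \in \add \Pi$. Using the reflexivity $D\omega \cong \Lambda$, the dualized sequence reads $0 \to \Lambda \to DQ_0 \to D\Omega \to 0$, which (a minimal projective cover dualizing to a minimal injective envelope) I would identify with $0 \to \Lambda \xrightarrow{i} \Pi^0 \to K \to 0$. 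Hence $K \cong D\Omega$, and since $\Omega \in \CM(\Lambda^{\mathrm{op}})$ and $D$ preserves maximal Cohen-Macaulayness, $K \in \CM(\Lambda)$, as claimed.

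The crux of the argument is the maximal Cohen-Macaulayness of the syzygy $\Omega$ in the second step; everything after it is formal bookkeeping with the duality $D$. The main point to handle with care is that $i$ must be the (minimal) CM-injective envelope, equivalently that $\Pi^0 = DQ_0$ arises from a \emph{projective cover}: for an arbitrary monomorphism $\Lambda \to \Pi^0$ the cokernel need not be maximal Cohen-Macaulay---already over a discrete valuation ring the map $\Lambda \xrightarrow{t} \Lambda$ has cokernel the residue field, of depth $0$---so it is precisely the minimality of $i$, transported across $D$ to the projective cover of $\omega$, that lands $K$ inside $\CM(\Lambda)$.
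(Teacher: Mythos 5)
Your proof is correct, but it reaches the conclusion by a genuinely different route from the paper. The paper extends $i$ to a CM-injective coresolution $0 \to \Lambda \to \Pi^0 \to I^1 \to \cdots$ and applies the depth lemma repeatedly to the kernels of that coresolution on the $\Lambda$-side, working backwards from the last nonzero term when the CM-injective dimension is finite and running $d$ steps forward when it is infinite. You instead do all the work on the $\Lambda^{\mathrm{op}}$ side, where a \emph{single} application of the depth lemma shows that the first syzygy $\Omega$ of $\omega$ is maximal Cohen--Macaulay (kernels of surjections between MCM modules gain rather than lose depth), and then carry this across the duality $D = \Hom_R(-,\omega_R)$ via $K \cong D\Omega$. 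Since the paper itself constructs CM-injective coresolutions by dualizing projective resolutions of $\omega$ over $\Lambda^{\mathrm{op}}$, the two arguments are dual to one another, but yours is more economical: one depth count suffices, and the finite and infinite CM-injective dimension cases are treated uniformly rather than separately. Your closing caveat is also a genuine observation about the statement rather than a defect of your proof: for a completely arbitrary monomorphism $i \colon \Lambda \to \Pi^0$ the cokernel need not be Cohen--Macaulay (your example $R \xrightarrow{t} R$ over a discrete valuation ring is apt), and the paper's proof silently assumes that $i$ extends to a CM-injective coresolution --- equivalently, up to a CM-projective-injective direct summand, that $i$ is the minimal CM-injective envelope, which is exactly what your identification $K \cong D\Omega$ uses. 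If $i$ is the minimal envelope followed by a split injection into $\add \Pi$, your argument still goes through, with $K$ isomorphic to $D\Omega$ plus a CM-projective-injective summand, hence still Cohen--Macaulay.
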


\begin{proof}
  If the CM-injective dimension of $\Lambda$ is finite, then let $X$ be the last nonzero CM-injective module in the coresolution (\ref{coresolution}). In particular, $X$ is a maximal Cohen-Macaulay $R$-module and by applying the depth lemma successively, we see that $K_\ell$ is maximal Cohen-Macaulay as an $R$-module. In case $\Lambda$ has infinite CM-injective dimension, then $K_\ell$ is the $d$th syzygy of a $\Lambda$-module. Once again, the depth lemma does the trick.
\end{proof}

Recall that a $\Lambda$-module $T$ is called an $\ell$-\textit{tilting} module if 
  \begin{enumerate}
    \item the projective dimension of $T$ is at most $\ell$,
    \item there is an exact sequence $0 \to \Lambda \to t_0 \to t_1 \to \ldots \to t_\ell  \to 0$ with $t_0, \ldots, t_\ell \in \add T$,
    \item there are no self-extensions of $T$ in the sense that $\Ext_\Lambda^{>0}(T,T) = 0$.
  \end{enumerate}
If $T$ is a tilting $\Lambda$-module, then $\Lambda$ and $\End_\Lambda(T)$ are derived equivalent. Hence, this is a derived version of Morita theory. The following lemma is about extensions with CM-projective-injective middle terms.

\begin{lemma}\label{ABC-lemma}
  Let $0 \to A \to B \to C \to 0$ be a short exact sequence of maximal Cohen-Macaulay $\Lambda$-modules with $B$ CM-projective-injective. Then, for every $i > 0$, we have
  \begin{align*}
      \Ext_\Lambda^i(A,A) \cong \Ext_\Lambda^i(C,C).
  \end{align*}
\end{lemma}
\begin{proof}
  If we apply $\Hom_\Lambda(-,A)$ to the short exact sequence, we get a long exact sequence
  \begin{align*}
      \ldots \to \Ext_\Lambda^i(B,A) \to \Ext_\Lambda^i(A,A) \to \Ext_\Lambda^{i+1} (C,A) \to \Ext_\Lambda^{i+1}(B,A) \to \ldots
  \end{align*}
  The outer terms vanish as $B$ is a projective $\Lambda$-module and therefore we see that $$\Ext_\Lambda^i(A,A) \cong \Ext_\Lambda^{i+1}(C,A).$$
  
  If we apply $\Hom_\Lambda(C,-)$ to the short exact sequence, we get a long exact sequence
  \begin{align*}
            \ldots \to \Ext_\Lambda^i(C,B) \to \Ext_\Lambda^i(C,C) \to \Ext_\Lambda^{i+1} (C,A) \to \Ext_\Lambda^{i+1}(C,B) \to \ldots
  \end{align*}
  Once again, the outer terms vanish as $B$ is a CM-injective $\Lambda$-module. So, we have isomorphisms \[\Ext_\Lambda^i(C,C) \cong \Ext_\Lambda^{i+1}(C,A). \]
  Combining the two isomorphisms, we get the result.
\end{proof}

\begin{lemma}
The $\Lambda$-module $T_\ell = K_\ell \oplus \Pi$ is an $\ell$-tilting $\Lambda$-module.
\end{lemma}
\begin{proof}
  The first two conditions of a tilting module hold by our definition of $T_\ell$. So, we will show the third condition. Note that we have $\Ext^i_\Lambda(\Pi, - ) = \Ext^i_\Lambda(-,\Pi) = 0$ for $i>0$ since $\Pi$ is CM-projective-injective. Thus, it is enough to show $\Ext_\Lambda^i(K_\ell,K_\ell) = 0$. Note also that by Lemma \ref{ABC-lemma}, it is enough to show $\Ext_\Lambda^i(K_1,K_1) = 0$ since we have a short exact sequence
  \begin{align*}
      0 \to K_j \to \Pi^j \to K_{j+1} \to 0
  \end{align*}
  for every $1 \leq j \leq \ell-1$. 
  
  For $i \geq 2$, we have $\Ext_\Lambda^i(K_1,K_1) = 0$ as $K_1$ has projective dimension $1$. To show that $\Ext_\Lambda^1(K_1,K_1)$ vanishes,  we apply $\Hom_\Lambda(K_1, -)$ to $0 \to \Lambda \to \Pi^0 \to K_1 \to 0$. We get a long exact sequence
  \begin{align*}
    \ldots \to \Ext^1_\Lambda(K_1, \Pi^0) \to \Ext_\Lambda^1(K_1,K_1) \to \Ext_\Lambda^{2}(K_1, \Lambda) \to \ldots .
  \end{align*}
  The outside terms vanish as $\Pi^0$ is CM-injective and projective dimension of $K_1$ is at most $1$.
\end{proof}

\begin{proposition}
The endomorphism ring $\Gamma_\ell = \End_\Lambda(T_\ell)$ is again an $R$-order.
\end{proposition}
\begin{proof}
  We need to show that $\End_\Lambda(T_\ell)$ is maximal Cohen-Macaulay as an $R$-module. Let us start with the direct sum decomposition $\End_\Lambda(T_\ell) \cong \Hom_\Lambda(\Pi, T_\ell) \oplus \Hom_\Lambda(K_\ell, T_\ell)$. We know that $\Pi$ is a projective $\Lambda$-module which tells us that $\Hom_\Lambda(\Pi, T_\ell) \in \add_\Lambda T_\ell$. Hence, $\Hom_\Lambda(\Pi, T_\ell)$ is maximal Cohen-Macaulay since so is $T_\ell$. Therefore, it is enough to show that $\Hom_\Lambda(K_\ell, T_\ell)$ is maximal Cohen-Macaulay. We further decompose this module as
  \begin{align*}
      \Hom_\Lambda(K_\ell, T_\ell) \cong \Hom_\Lambda(K_\ell, K_\ell) \oplus \Hom_\Lambda(K_\ell, \Pi).
  \end{align*}
  Since $\Pi$ is a projective module, we have $\Hom_\Lambda(K_\ell, \Pi)$ is a summand in $\Hom_\Lambda(K_\ell, \Lambda) \cong \Hom_R(K, \omega_R)$ which is a maximal Cohen-Macaulay module. So, it is enough to show that $\Hom_\Lambda(K_\ell, K_\ell)$ is maximal Cohen-Macaulay. We will do this by showing that $\Hom_\Lambda(K_j, K_\ell)$ is maximal Cohen-Macaulay for every $1 \leq j \leq \ell$.
  
  We start with the short exact sequence $     0 \to \Lambda \to \Pi^0 \to K_1 \to 0$ defining $K_1$ and we apply $\Hom_\Lambda( -,K_\ell)$ to it to get an exact sequence
  \begin{align*}
    0 \to \Hom_\Lambda(K_1,K_\ell) \to \Hom_\Lambda(\Pi^{0}, K_\ell) \to \Hom_\Lambda(\Lambda, K_\ell) \to \Ext_\Lambda^1(K_1,K_\ell).
  \end{align*} 
  The rightmost term is zero. Indeed, $K_1$ is the $(\ell-1)^{th}$ syzygy of $K_\ell$ as a $\Lambda$-module. Therefore, $      \Ext_\Lambda^1(K_1,K_\ell) \cong \Ext_\Lambda^\ell(K_\ell, K_\ell) \cong 0
$  by the previous lemma. We have that $\Pi^0$ and $\Lambda$ both projective. So, both $\Hom_\Lambda(\Pi^{0}, K_\ell)$ and $ \Hom_\Lambda(\Lambda, K_\ell)$ are in $\add_\Lambda K_\ell$. Thus, they are both maximal Cohen-Macaulay. By the depth lemma, $\Hom_\Lambda(K_1,K_\ell)$ is also maximal Cohen-Macaulay. Now, assume that the result holds for $j < \ell$. We have a short exact sequence
\begin{align*}
    0 \to K_j \to \Pi^j \to K_{j+1} \to 0
\end{align*}
to which we apply $\Hom_\Lambda(-,K_\ell)$. We get an exact sequence
\begin{align*}
    0 \to \Hom_\Lambda(K_{j+1},K_\ell) \to \Hom_\Lambda(\Pi^{j}, K_\ell) \to \Hom_\Lambda(K_j, K_\ell) \to \Ext_\Lambda^1(K_{j+1},K_\ell).
\end{align*}
  By the same argument as above, the rightmost term is zero and the second term is maximal Cohen-Macaulay. By the induction hypothesis, the third terms is also maximal Cohen-Macaulay. Therefore, by the depth lemma, $\Hom_\Lambda(K_{j+1}, K_\ell)$ is maximal Cohen-Macaulay which finishes the proof.
\end{proof}

\begin{definition}
  We call the module $T_\ell$ the \textit{$\ell^{th}$-shifted module} of $\Lambda$ and $\Gamma_\ell = \End_\Lambda(T_\ell)$ the \textit{$\ell^{th}$-shifted order}.
\end{definition}
We choose the terminology after Matthew Pressland and Julia Sauter \cite[Definition 2.5]{PresslandSauter}.

\begin{lemma}\label{cm-inj-of-shifted}
  The $\ell^{th}$-shifted module $T_\ell$ has CM-injective dimension $n-\ell$.
\end{lemma}
\begin{proof}
  The $\ell^{th}$-shifted module is the direct sum of a CM-projective-injective with the cokernel $K_\ell$ in Lemma \ref{lemmadefiningK}. If $0 \to \Lambda \to \Pi^0 \to \Pi^1 \to \ldots$ is a CM-projective-injective coresolution of $\Lambda$, then $0 \to K_\ell \to \Pi^{\ell} \to  \ldots$ is a CM-injective coresolution of $K$.
\end{proof}

\begin{chunk}
\textbf{Towards the main theorem.} For the purposes of the next three lemmas and the following proposition, we will assume that $\Lambda$ is an $n$-canonical $R$-order, $M$ is a maximal Cohen-Macaulay $\Lambda$-module with CM-injective dimension $m$. Let us denote by $K^{-,b}(\add M)$ the homotopy category of complexes of $\Lambda$-modules with terms in $\add_\Lambda M$, bounded below and with finitely many non-vanishing cohomology groups. We consider a complex $X = (X^i, d^i) \in K^{-,b}(\add M)$ with vanishing nonnegative cohomology and put $L_j = \ker d^j$ for $j \leq 0$. We also let $L_1 = \imof d^0$ and $L_2 = X^1/\imof d^0$. Hence, we have short exact sequences 
  \begin{align}\label{ses}
    0 \to L_j \to X^j \to L_{j+1} \to 0
  \end{align}
  for $j \leq 1$. Assume that $\Ext_\Lambda^i(M,M) = 0$ for any $i > 0$.
  
  \begin{lemma}\label{2-d-lemma}
    With the notation as above, we have that $L_j$ is a maximal Cohen-Macaulay $\Lambda$-module for every $j \leq 2-d$.
  \end{lemma}
  \begin{proof}
    The proof is by applying the depth lemma to the short exact sequences (\ref{ses}).
  \end{proof}

  \begin{lemma}\label{first-isomorphisms}
    Suppose $t \leq 2$. If $i - t \geq m + d - 1$, then $\Ext_{\Lambda}^i(L_t, M) = 0$.
  \end{lemma}
  \begin{proof}
    Applying $\Hom_\Lambda(-,M)$ to the short exact sequences (\ref{ses}) yields long exact sequences
    \begin{align*}
      \ldots \to \Ext^i_\Lambda(X^j, M) \to \Ext_\Lambda^i(L_j, M) \to \Ext_{\Lambda}^{i+1}(L_{j+1},M) \to \Ext_\Lambda^{i+1}(X^j, M) \to \ldots
    \end{align*}
    The outside two terms vanish for $i>0$ as $X^j \in \add M$ and we assumed that $\Ext_\Lambda^i(M,M)$ vanishes for $i > 0$. So, we have isomorphisms
    \begin{align*}
      \Ext_\Lambda^i(L_j, M) \cong \Ext_\Lambda^{i+1}(L_{j+1}, M)
    \end{align*}
    for all $i >0$ and $j \leq 1$. This gives us the vanishings
    \begin{align*}
    \Ext_\Lambda^1(L_{j-m}, M) \cong \ldots   \cong \Ext_\Lambda^m(L_{j-1}, M) \cong \Ext_\Lambda^{m+1}(L_{j}, M) \cong 0
    \end{align*}
    and
    \begin{align*}
       0 \cong  \Ext_\Lambda^{m+1}(L_{j}, M) \cong \Ext_\Lambda^{m+2}(L_{j+1}, M) \cong \ldots \cong \Ext_\Lambda^{m+2-j}(L_{1}, M) 
    \end{align*} 
    for $j \leq 2-d$. In other words, if $i - t = m - 1 - j$, then $\Ext_\Lambda^i(L_t, M) = 0$. This is because we have $L_{j} \in \CM(\Lambda)$ for $j \leq 2-d$ and $M$ has CM-injective dimension $m$. In particular, we have $\Ext_\Lambda^i(L_t, M) = 0$ provided $i - t \geq m+d-1$.
  \end{proof}
  \begin{lemma}\label{second-isomorphisms}
    If $i-t \geq m + d - 1$, then, we have $\Ext_\Lambda^i(L_t, L_{j+1}) \cong \Ext_\Lambda^{i+1}(L_t, L_j)$ for any $j \leq 1$.
  \end{lemma}
    \begin{proof}
      By applying $\Hom_\Lambda(L_t, -)$ to the short exact sequences (\ref{ses}), we get a long exact sequence
    \begin{align*}
              \ldots \to \Ext^i_\Lambda(L_t, X^j) \to \Ext_\Lambda^i(L_t, L_{j+1}) \to \Ext_{\Lambda}^{i+1}(L_t, L_j) \to \Ext_\Lambda^{i+1}(L_t, X^j) \to \ldots.
    \end{align*}
    When $i-t \geq m+d-1$, the outer terms vanish by \ref{first-isomorphisms}. Hence, we have the desired isomorphisms.
    \end{proof}
    \begin{proposition}
    Assume that $\Lambda$ has finite global dimension. If $t \leq 2-d-m$, then $\Ext^1_\Lambda(L_t, L_{j+1}) = 0$ for any $j \leq 1$. In particular, we have
    \begin{align*}
        \Ext^1_\Lambda(L_{2-d-m}, L_{1-d-m}) = 0.
    \end{align*}
    \end{proposition}
    \begin{proof}
      The condition $t \leq 2-d-m$ is equivalent to the condition $1-t \leq m + d - 1$. So, we can apply Lemma \ref{second-isomorphisms} to get
      \begin{align*}
          \Ext_\Lambda^1(L_t, L_{j+1}) \cong \Ext_\Lambda^2(L_t, L_j) \cong \ldots \cong \Ext_\Lambda^n(L_t, L_{j+2-n}) \cong \Ext_\Lambda^{n+1}(L_t, L_{j+1-n}).
      \end{align*}
      Since $t \leq 2-d-m \leq 2-d$, we get that $L_t \in \CM(\Lambda)$. Therefore, its projective dimension is bounded above by $n$. This gives us that $\Ext_\Lambda^{n+1}(L_t, L_{j+1-n})$ vanishes which finishes the proof.
    \end{proof}
    
  \end{chunk}
  
  \begin{corollary}\label{technical-corollary}
  Let $\Lambda$ be an $n$-canonical $R$-order of finite global dimension and $M$ be a $\Lambda$-module such that $\Ext_\Lambda^i(M,M) = 0$ for all $i > 0$. For any $X \in K^{-,b}(\add M)$ with vanishing non-negative cohomology, we have $X = Y \oplus Z$ where $Y$ is acyclic and $Z^i = 0$ for all $i < 1-m-d$ where $m$ is the CM-injective dimension of $M$.
  \end{corollary}
  \begin{proof}
    With the notation as above, we have short exact sequences
    \begin{align*}
        0 \to L_j \to X^j \to L_{j+1} \to 0
    \end{align*}
    and we now know that $\Ext^1_\Lambda(L_{2-d-m}, L_{1-d-m}) = 0$ so that the sequence
    \begin{align*}
        0 \to L_{1-d-m} \to X^{1-d-m} \to L_{2-d-m} \to 0
    \end{align*}
    splits and we have a decomposition $X^{1-d-m} \cong L_{1-d-m} \oplus L_{2-d-m} $. In particular, we have that $L_{1-d-m}, L_{2-d-m} \in \CM(\Lambda)$.
    Therefore, choosing $Y$ and $Z$ as follows give us the desired direct sum decomposition.
    \begin{align*}
      \xymatrix{
	Y = \ldots \ar[r]& X^{-1-d-m} \ar[r]& X^{-d-m} \ar[r]& L_{1-d-m} \ar[r]& 0 \ar[r]& 0 \ar[r]& \ldots \\
	Z = \ldots \ar[r]& 0 \ar[r] & 0 \ar[r] & L_{2-d-m} \ar[r] & X^{2-d-m} \ar[r] & X^{3-d-m} \ar[r] & \ldots
      }
    \end{align*}
  \end{proof}
  
  \begin{theorem}\label{key-to-main-theorem}
  Suppose that $\Lambda$ is an $R$-order of finite global dimension and $M \in \CM(\Lambda)$ is an $\ell$-tilting module of CM-injective dimension $m$. Then, we have
  \begin{align*}
      \gldim \End_\Lambda(M) \leq \ell + m + d.
  \end{align*}
  \end{theorem}
  \begin{proof}
    Let us start with putting $\Gamma = \End_\Lambda(M)$. Since $\Lambda$ is of finite global dimension and $\Gamma$ is derived equivalent to $\Lambda$, we know that $\Gamma$ also has finite global dimension. We will show that any $X \in \module \Gamma$ has projective dimension at most $\ell + m + d$.
    
    Let $P$ be a minimal projective resolution of $X$. Since $P$ is minimal, the width of $P$ is $\pd X + 1$. The equivalence between $D^b(\Lambda)$ and $D^b(\Gamma)$ restricts to an equivalence between $K^{-,b}(\add M)$ and $K^{-,b} (\proj \Gamma)$. The image of $P \in K^{-,b}(\proj \Gamma)$ under this equivalence is $M \otimes_{\Gamma} P$. Since $P$ is minimal, it has no nonzero acyclic summands and the same holds for $M \otimes_{\Gamma} P$. 
    
    Since $M$ is a tilting $\Lambda$-module of projective dimension at most $\ell$, it has projective dimension at most $\ell$ as a $\Gamma$-module. Hence,
  \begin{align*}
  H^i(M \otimes_{\Gamma} P) = \Tor_{-i}^{\Gamma}(M,X) = 0
  \end{align*}
  for $i < - \ell$. Thus, $M \otimes_{\Gamma} P[-\ell -1 ]$ has no non-negative cohomology and we can apply Corollary \ref{technical-corollary}. This gives us a direct sum decomposition $M \otimes_{\Gamma} P[-\ell -1 ] = Y \oplus Z$ where $Y$ is acylic and $Z^i = 0$ for $i < 1 - m- d$. But since $M \otimes_{\Gamma} P$ does not have any nonzero acylic summands, we must have $Y = 0$. Therefore, we actually have $M \otimes_{\Gamma} P[-1-\ell] \cong  Z$. So, we can conclude that $Z^i$ is only allowed to be nonzero in the interval $1-d-m \leq i \leq \ell + 1$. Consequently, we have an upper bound on the width of $M \otimes_\Gamma P$. Namely, $\ell + m + d + 1$. This means that the width of $P$ is bounded above by $\ell + m + d + 1$ and thus the projective dimension of $X$ is bounded above by $\ell + m + d$.
  \end{proof}
  
  \begin{corollary}\label{maintheorem}
  Suppose that $\Lambda$ is an $n$-canonical QF-3 order of finite global dimension. Then, the global dimension of the $\ell^{\rm{th}}$-shifted order $\Gamma_\ell$ is bounded above by the global dimension of $\Lambda$.
  \end{corollary}
  \begin{proof}
  The $\ell^{\rm{th}}$-shifted module $T_\ell$ is an $\ell$-tilting module in $\CM(\Lambda)$ which has CM-injective dimension $n- \ell$ by Lemma \ref{cm-inj-of-shifted}. Thus by Theorem \ref{key-to-main-theorem}, we get that 
  \begin{align*}
      \gldim \Gamma_\ell \leq n + d = \gldim \Lambda.
  \end{align*}
  \end{proof}
  
\bibliographystyle{alpha}
\bibliography{arxiv_version}

\end{document}